\def\cF{{\mathcal F}}
\DeclareMathOperator{\aut}{Aut}
\DeclareMathOperator{\cay}{Cay}
\DeclareMathOperator{\id}{id}
\DeclareMathOperator{\iso}{Iso}
\DeclareMathOperator{\sym}{Sym}
\DeclareMathOperator{\poly}{poly}
\DeclareMathOperator{\WL}{WL}
\def\@seccntformat#1{\csname the#1\endcsname. } 
\def\@biblabel#1{#1.}
\title[Isomorphism testing of $k$-spanning tournaments]{Isomorphism
  testing of $k$-spanning tournaments is Fixed Parameter Tractable}
\author{Vikraman Arvind}
\address{The Institute of Mathematical Sciences (HBNI), Chennai, India}
\email{arvind@imsc.res.in}
\author{Ilia Ponomarenko}
\address{St. Petersburg Department of V.A. Steklov Institute of Mathematics, St. Petersburg, Russia}
\email{inp@pdmi.ras.ru}
\author{Grigory Ryabov}
\address{Ben Gurion University of the Negev, Beer Sheva, Israel}
\email{gric2ryabov@gmail.com}
\thanks{The third author is supported by The Israel Science Foundation (project No.~87792731)}
\date{}
\newtheorem{state}{Statement}[section]
\newtheorem{lemm}[state]{Lemma}
\newtheorem{theo}[state]{Theorem}
\newtheorem{prop}[state]{Proposition}
\theoremstyle{definition}
\begin{document}

\vspace{\baselineskip}
\vspace{\baselineskip}

\vspace{\baselineskip}

\vspace{\baselineskip}

\begin{abstract}
An arc-colored tournament is said to be \emph{$k$-spanning} for an
integer $k\geq 1$ if the union of its arc-color classes of maximal
valency at most~$k$ is the arc set of a strongly connected digraph. It is
proved that isomorphism testing of $k$-spanning tournaments is
fixed-parameter tractable.  \\ \\ \textbf{Keywords}: Graph isomorphism
problem, colored tournaments, fixed-parameter tractable algorithm.
\\ \\ \textbf{MSC}: 05C20, 05C60, 05C85.
\end{abstract}

\maketitle

\section{Introduction}

Over the last two decades, there has been an increased interest in algorithms testing
graph isomorphism that are of fixed parameter tractable complexity (FPT algorithms). 
More precisely, for the
graph isomorphism instances we bound some natural graph parameter by a
value $k$, which defines a graph class for the input instances. An FPT
isomorphism testing algorithm for input graphs from such a class
is required to have running time bounded by $f(k)\cdot n^{O(1)}$,
where $f(\cdot)$ is allowed to be an arbitrary function of the
parameter $k$, but is independent of the input size, and $n$ is the
number of vertices in the input graph.

Some well-studied parameters in this context are, for instance, the
eigenvalue multiplicity of the input graphs~\cite{EP99}, color class
size of colored input hypergraphs~\cite{ADKT}, the treewidth of the
input graphs~\cite{LPPS} etc. However, for the  well known (in the context)
parameter $k$ bounding
the valency of the input graphs, testing isomorphism in FPT remains an open problem
despite some
recent interesting progress \cite{GNS2020}. A remark in the seminal paper
of Babai and Luks~\cite[p.~8]{BL}  could be
useful for constructing such an algorithm, but the paper in preparation
referred there was probably not published.  Motivated by this open
problem, in the present note we describe an FPT algorithm for testing
isomorphism of colored tournaments with respect to a new parameter
that plays an analogous role for arc-colored tournaments as valency
for graphs.

\textbf{The parameter for tournaments.}
Recall that a \emph{tournament} is a digraph obtained by assigning a
direction for each edge in an undirected complete graph without
loops. The automorphism group of a tournament is of odd order and
hence solvable by the Feit-Thompson theorem. Based on this fact, it
was proved in~\cite{BL} that the isomorphism of tournaments can be
tested in quasipolynomial time (of course, this also follows from the
recent result~\cite{Babai}). On the other hand, to date, no algorithms
are known for isomorphism testing of tournaments, which are in FPT for
some parameter. In the present note, we construct such an algorithm
for \emph{$k$-spanning} tournaments defined as follows. An arc-colored
tournament is said to be $k$-spanning for an integer $k\geq 1$ if the
union of its arc-color classes of maximal valency at most~$k$ is the
arc set of a strongly connected digraph (examples of $k$-spanning
tournaments are discussed in Subsection~2.2).

\begin{theo}\label{main}
Let $k\geq 1$. Given $k$-spanning colored tournaments~$X$ and~$Y$, the
set $\iso(X,Y)$ of all isomorphisms from $X$ to~$Y$ can be found in time~$k^{O(\log k)}\poly(n)$, where
$n$ is the number of vertices in $X$.
\end{theo}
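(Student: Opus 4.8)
The plan is to reduce the computation of $\iso(X,Y)$ to a sequence of group-theoretic subproblems in the spirit of Luks, exploiting two features of the input: the \emph{solvability} of every group that arises, guaranteed by the Feit--Thompson theorem since all of them are sections of automorphism groups of tournaments and hence of odd order, and the \emph{bounded valency} of the spanning color classes. Write $D_X$ for the digraph whose arcs are those of $X$ lying in color classes of maximal valency at most~$k$, and $D_Y$ for the analogous digraph of $Y$; by hypothesis $D_X$ and $D_Y$ are strongly connected and span the vertex sets. Since every isomorphism $X\to Y$ preserves colors and hence valencies, it maps $D_X$ onto $D_Y$, so it suffices to search for isomorphisms guided by the colored digraphs $D_X,D_Y$ while checking, on the fly, the remaining (high-valency) arcs of $X$ and~$Y$. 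As usual it is enough to produce $\iso(X,Y)$ as a coset $\aut(X)g_0$, so I would build $\aut(X)$ together with one isomorphism $g_0$ (if one exists) in the same recursion.

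Concretely, I would fix a vertex $v$ of $X$ and, using strong connectivity of $D_X$, order the vertices $v=u_1,u_2,\dots,u_n$ so that each $u_i$ with $i\ge 2$ is joined by an arc of $D_X$ to some earlier $u_j$ (running over the $\le n$ possible images of $v$ to recover the full group and coset). Along this order I maintain the automorphism group $G_i$ of the colored subdigraph induced on $\{u_1,\dots,u_i\}$, together with the corresponding partial isomorphism coset. When $u_i$ is inserted as a $D_X$-neighbor of the already-placed $u_j$ in a color class $c$ of valency at most $k$, the image of $u_i$ is forced to be a $c$-neighbor of the image of $u_j$; since the $c$-valency is at most $k$, there are at most $k$ candidate images, and for each I verify consistency of \emph{all} arcs, of every color, from $u_i$ to the previously placed vertices. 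Thus the branching at each insertion is at most~$k$, and strong connectivity guarantees that such an insertion is always available. A naive search would cost $k^{n}$, and the whole point of the construction is to replace it by a group computation.

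The engine is the update step. Passing from $G_{i-1}$ to $G_i$ amounts to computing, inside the solvable group $G_{i-1}$, the stabilizer and a transversal of the condition imposed at $u_i$, a condition that is non-trivial only on the $\le k$ candidate images and their local arc-pattern. This is a string-isomorphism/set-stabilizer problem for a \emph{solvable} permutation group whose non-trivial action is confined to $O(k)$ points. For such problems the standard recursion over a system of imprimitivity, terminating at primitive solvable quotients whose order is polynomial in the degree by P\'alfy's theorem, runs in time $k^{O(\log k)}$: the recursion has depth $O(\log k)$ and branching $\poly(k)$. Performing one such update per inserted vertex, and keeping the generating sets of the $G_i$ of polynomial size, the whole procedure costs $k^{O(\log k)}\cdot\poly(n)$, as claimed; carrying the isomorphism coset in parallel only doubles the work.

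The hard part, and the step on which I would spend the most care, is justifying that each update is genuinely a computation on $O(k)$ points rather than on all of~$V$: one must show that bounded valency together with strong connectivity confines the non-trivial part of the stabilizer computation to the $\le k$ candidates, equivalently that the primitive quotients arising in the solvable recursion have degree bounded in terms of $k$ and not in terms of~$n$. This is exactly where the two hypotheses interact, and it is what separates the present FPT bound from the $n^{O(\log n)}$ bound available for arbitrary tournaments. Keeping the groups solvable throughout, so that P\'alfy's bound applies at every primitive level, and maintaining the bookkeeping of generators and cosets in $\poly(n)$ space, are the remaining technical points.
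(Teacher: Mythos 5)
Your high-level instincts (solvability via Feit--Thompson, a bounded-valency spanning digraph to limit branching, P\'alfy's bound on primitive solvable groups) match the ingredients of the paper, but the recursion you build on them has a genuine gap, and you have in fact named it yourself: you never justify that the update from $G_{i-1}$ to $G_i$ is a computation ``confined to $O(k)$ points''. In a tournament the new vertex $u_i$ is adjacent to \emph{every} previously placed vertex, so the consistency condition at step $i$ is a constraint on the full arc-color vector from $u_i$ to $\{u_1,\dots,u_{i-1}\}$ --- a string/set-stabilizer instance of degree $i-1$, not $k$. Bounded valency only bounds the number of \emph{candidate images} of $u_i$ (at most $k$ of them); it does not localize the stabilizer computation. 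Worse, the object you propose to maintain, $G_i=\aut$ of the colored subdigraph induced on $\{u_1,\dots,u_i\}$, is not usable: the prefix $\{u_1,\dots,u_i\}$ is not canonically defined (it depends on arbitrary choices in the insertion order), hence is not $\aut(X)$-invariant, automorphisms of $X$ need not restrict to permutations of it, and conversely $G_i$ contains spurious automorphisms of the induced subdigraph that extend to nothing. So the invariant relating $G_i$ to $\aut(X)$ that your induction needs is never available.

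The paper resolves exactly this difficulty with a different decomposition: it processes whole \emph{vertex-color classes} at a time rather than single vertices. To extend the known group from a union of classes $\Sigma$ across a low-valency arc-color class $D\subseteq\Gamma\times\Delta$, it observes that each out-neighborhood $\gamma D$ carries an induced \emph{tournament} $X(\gamma)$ on at most $k$ vertices; the only superpolynomial work is computing $\iso(X(\gamma),X(\gamma'))$ for these $k$-vertex tournaments by Babai--Luks, costing $k^{O(\log k)}$. Lemma~\ref{wr} packages these coset data into a group on $D$ permutationally isomorphic to $\aut(X(\gamma))\wr K$, which is solvable, and the remaining constraints are imposed by intersecting with the automorphism group of an auxiliary hypergraph via Miller's algorithm --- polynomial time precisely because the ambient group is solvable. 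If the $X(\gamma)$ are not all isomorphic, the coloring is refined by Weisfeiler--Leman and the whole procedure restarts; your scheme has no analogue of this refine-and-restart mechanism. Finally, the reduction from $\iso(X,Y)$ to automorphism groups is not the ``usual'' one: the standard disjoint-union trick destroys both the tournament property and $k$-spanningness, so the paper builds the gadget tournaments $T_\beta$ on $3n+1$ vertices (Lemma~\ref{reduction}); your one-line ``carry the coset in parallel'' presupposes a sound recursion, which is the very thing missing.
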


It should be remarked that Theorem~\ref{main} can be extended
to a wider class of colored tournaments for which the digraph formed
by the arc-color classes of bounded valency is weakly
connected. Besides, for~$k$ close to~$n$, our algorithm has the same
complexity as the algorithm in~\cite{BL}. In fact, we use the latter
algorithm for $n\le k$ (as well as the algorithm from~\cite{Miller} for
finding the intersection of a solvable group with the automorphism group 
of a hypergraph).

The text of the paper is organized as follows. Section~$2$ contains
the necessary background about colored digraphs, including the
definition and examples of $k$-spanning colored tournaments, wreath
product of permutation groups, and some known algorithms. In
Section~3, we construct an auxiliary procedure which extends an action
of a permutation group from a vertex-color class to the other one or
refine the coloring. In Section~$4$, we prove Theorem~\ref{main}.

\section{Preliminaries}

\subsection{Colored digraphs}

A digraph~$X$ is said to be \emph{colored} if it is equipped with linear
ordered partitions~$\pi$ and~$S$ of the vertex and arc sets of~$X$,
respectively; the elements of $\pi$ and $S$ are referred to
\emph{vertex-color classes} and \emph{arc-color classes}. The index
numbers of vertex- and arc-color classes in the corresponding linear orderings are 
called vertex  and arc \emph{colors}, respectively.

An isomorphism from a colored digraph $X$ to a colored
digraph~$X^{\prime}$ is an ordinary digraph isomorphism that preserves
vertex- and arc-color classes, respecting the corresponding linear
orderings. The coset of all isomorphisms from $X$ to $X^{\prime}$ is
denoted by $\iso(X,X^{\prime})$. The group $\aut(X)=\iso(X,X)$ is
called the \emph{automorphism group} of $X$. If $X$ and $X'$ are
isomorphic, then $\iso(X,X')$ is the right coset
$\aut(X)\pi$, where $\pi\in\iso(X,X')$; of course,
$\iso(X,X^{\prime})$ is empty if $X$ and $X^\prime$ are not
isomorphic.

Suppose $X$ and $X^{\prime}$ have the same vertex set. We write
$X^{\prime}\succeq X$ if each vertex-color class (respectively,
arc-color class) of~$X$ is a union of some vertex-color classes
(respectively, arc-color classes) of~$X^{\prime}$, and
$\aut(X)=\aut(X^{\prime})$. As usual, $X^{\prime}\succ X$ if
$X^{\prime}\succeq X$ and $X^{\prime}\neq X$.

\subsection{$k$-spanning digraphs}\label{140122a}

Let $\Omega$ be the vertex set of the colored digraph $X$ and $k\geq
1$. Denote by $s_k$ the union of all $s\in S$ such that
\begin{equation}\label{valency} 
|\alpha s|\leq k, \qquad \alpha\in \Omega,
\end{equation}
where $\alpha s=\{\beta\in \Omega:~(\alpha,\beta)\in s\}$. We say that
$X$ is \emph{$k$-spanning} if the digraph with vertex set $\Omega$ and arc set~$s_k$ is strongly
connected, i.e., there exists a directed path from every vertex to
any other. Two natural examples of $k$-spanning tournaments are given
below. In what follows, for a binary relation~$s$ the minimal
number~$k$ satisfying  Eq.~\eqref{valency} is called the
\emph{maximal valency} of $s$.\medskip

{\bf Cayley tournaments.}  Let $G$ be a finite group of odd order and
$A$ be an identity-free subset of~$G$ such that~$|A\cap
\{g,g^{-1}\}|=1$ for every nonidentity $g\in G$. Then the Cayley digraph
$X=\cay(G,A)$ is a tournament. Every partition~$A=A_1\cup \ldots \cup
A_m$ induces an ordered partition of the arcs of~$X$, the $i$th class
of which is the arc set of the Cayley digraph $\cay(G,A_i)$,
$i=1,\ldots,m$. The resulting colored tournament (with arbitrarily
chosen vertex-colored classes) is $k$-spanning if and only if the
union of all~$A_i$ of cardinality at most~$k$ is a generating set
of~$G$.  In the special case $|A_i|\le k$ for all~$i$, this always
gives a $k$-spanning (Cayley) tournament, and even for small $k$, the
number of such tournaments is exponential in $|G|$. Note that
recognizing Cayley tournaments efficiently is an open problem, and
polynomial-time algorithms are known only for some groups $G$
close to cyclic, see~\cite{P1992,R2021}.\medskip

{\bf Tournaments with bounded immersion.} Recall that a tournament~$Y$
is immersed in a tournament~$X$ if the vertices of~$Y$ are mapped to
distinct vertices of~$X$ and the arcs of~$Y$ are mapped to directed
paths joining the corresponding pairs of vertices of $X$, in such a
way that these paths are pairwise arc-disjoint (see,
e.g.,~\cite{L2019}). Given a tournament~$X$, denote by $k(X)$ the
maximal integer~$k$ such that a transitive tournament\footnote{A
  tournament is said to be \emph{transitive} if the binary relation
  defined by the arcs is transitive.} with $k$ vertices is immersed
in~$X$. We claim that if $X$ is a tournament with $k(X)\leq k$ and
colored by the Weisfeiler-Leman algorithm (see Subsection~2.4), then
$X$ is $O(k^3)$-spanning. Indeed, for every vertex-color class
$\Delta$ of~$X$, the induced subdigraph $X_{\Delta}$ is a regular
tournament with $k(X_{\Delta})\leq k(X)\leq k$. By~\cite{L2019}, this
yields $|\Delta|=O(k^3)$. Thus, the maximum valency of each arc-color
class of $X$ is at most~$O(k^3)$. It should be noted that every
$n$-vertex tournament has a transitive subtournament with at least
$\log n$ vertices; in particular, $k(X)\ge \log n$. Hence, this
example does not really give a fixed parameter problem. However, the
algorithm of Theorem~\ref{main} is significantly faster than the
$n^{\log n}$ time algorithm for general tournaments, like for example
when $k(X)=O(\log n)^{O(1)}$.


\subsection{Wreath product}

Let $X$ be a digraph with vertex set $\Gamma\sqcup \Delta$ and  the
 intersection $D$ of the arc set of~$X$ with
$\Gamma\times\Delta$ is nonempty. Suppose that for every
$\gamma,\gamma^{\prime}\in \Gamma$, we are given a nonempty set
$H_{\gamma,\gamma^{\prime}}$ of bijections $\gamma D\to \gamma'D$ 
(see notation in Subsection~\ref{140122a}) such that
\begin{equation}\label{property}
H_{\gamma,\gamma^{\prime}}H_{\gamma^{\prime},\gamma^{\prime\prime}}=H_{\gamma,\gamma^{\prime\prime}},\qquad\gamma,\gamma^{\prime},\gamma^{\prime\prime}\in \Gamma,
\end{equation}
where
$H_{\gamma,\gamma^{\prime}}H_{\gamma^{\prime},\gamma^{\prime\prime}}$
is the set of all compositions $\psi\circ \rho$ with
$\psi\in H_{\gamma,\gamma^{\prime}}$ and 
$\rho \in H_{\gamma^{\prime},\gamma^{\prime\prime}}$. Note that the assumption implies  $|\gamma D|=|\gamma'D|$ for all $\gamma,\gamma'$.

For $g\in \sym(\Gamma)$, denote by $\mathcal{F}(g)$ the set of all
full systems of distinct representatives of the
family~$\{H_{\gamma,\gamma^g}:~\gamma\in \Gamma\}$; we have $\cF(g)\ne\varnothing$, because $H_{\gamma,\gamma'}\ne\varnothing$ for all $\gamma,\gamma'$. Given such a
system $F=\{f_{\gamma,\gamma^g}\in H_{\gamma,\gamma^g}:~\gamma\in \Gamma\}$, we define a
permutation~$f_{F}\in \sym(D)$ as follows:
$$
(\gamma,\delta)^{f_{F}}=(\gamma^g,\delta^{f_{\gamma,\gamma^g}}),\qquad \gamma\in \Gamma,\  \delta\in \gamma D.
$$

\begin{lemm}\label{wr}
Let $X$ be the above digraph and $K\leq \sym(\Gamma)$. Then the set
\begin{equation}\label{wreath}
W=W(X,K)=\{f_{F}:~F\in \mathcal{F}(g),~g\in K\}
\end{equation}
is a permutation group on~$D$. Moreover, this group is permutationally
isomorphic to $H_{\gamma,\gamma}\wr K$ for every $\gamma\in \Gamma$.
\end{lemm}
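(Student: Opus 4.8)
The plan is to extract from the cocycle identity~\eqref{property} the fact that the sets $H_{\gamma,\gamma'}$ behave like the morphism sets of a principal groupoid, and then to trivialize this groupoid by a choice of base point and representatives, which turns the conjugated action of $W$ into the imprimitive representation of a wreath product. First I would record the basic groupoid facts. Since $\gamma D$ is finite, $H_{\gamma,\gamma}$ is nonempty and, by~\eqref{property} with $\gamma=\gamma'=\gamma''$, closed under composition; being a finite subsemigroup of $\sym(\gamma D)$ it is therefore a subgroup. Composing with one fixed bijection gives injections $H_{\gamma,\gamma}\hookrightarrow H_{\gamma,\gamma'}$ and $H_{\gamma,\gamma'}\hookrightarrow H_{\gamma,\gamma}$, so all the sets $H_{\gamma,\gamma'}$ share a common cardinality $m$. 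Combining this with the identities $H_{\gamma,\gamma}H_{\gamma,\gamma'}=H_{\gamma,\gamma'}=H_{\gamma,\gamma'}H_{\gamma',\gamma'}$ and a counting argument, I obtain that for every $h\in H_{\gamma,\gamma'}$ one has $H_{\gamma,\gamma'}=H_{\gamma,\gamma}\,h=h\,H_{\gamma',\gamma'}$ and $H_{\gamma',\gamma}=\{\psi^{-1}:\psi\in H_{\gamma,\gamma'}\}$; in particular each $H_{\gamma,\gamma'}$ is a single coset, and $H_{\gamma',\gamma'}=t^{-1}H_{\gamma,\gamma}t$ for any $t\in H_{\gamma,\gamma'}$.

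Because the groups $H_{\gamma,\gamma}$ are thus pairwise permutationally isomorphic, the wreath products $H_{\gamma,\gamma}\wr K$ are all permutationally isomorphic, so it suffices to prove the claim for a single $\gamma_0\in\Gamma$; set $H:=H_{\gamma_0,\gamma_0}$. I would then choose representatives $t_\gamma\in H_{\gamma_0,\gamma}$ (bijections $\gamma_0 D\to\gamma D$) with $t_{\gamma_0}=\id$, and define $\Phi\colon\gamma_0 D\times\Gamma\to D$ by $\Phi(\delta_0,\gamma)=(\gamma,\delta_0^{t_\gamma})$. This is a bijection since each $t_\gamma$ is, and it transports the candidate action of $W$ on $D$ to $\gamma_0 D\times\Gamma$.

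For $f_F\in W$ with underlying $g\in K$ and $F=\{f_{\gamma,\gamma^g}\}$, a direct computation gives $\Phi(\delta_0,\gamma)^{f_F}=(\gamma^g,\delta_0^{t_\gamma f_{\gamma,\gamma^g}})=\Phi(\delta_0^{h_\gamma},\gamma^g)$, where $h_\gamma:=t_\gamma f_{\gamma,\gamma^g}t_{\gamma^g}^{-1}$. By the groupoid facts, $h_\gamma\in H_{\gamma_0,\gamma}H_{\gamma,\gamma^g}H_{\gamma^g,\gamma_0}=H$, so $\Phi^{-1}f_F\Phi$ acts by $(\delta_0,\gamma)\mapsto(\delta_0^{h_\gamma},\gamma^g)$, which is precisely the imprimitive action of the element $((h_\gamma)_{\gamma\in\Gamma},g)$ of $H\wr K$. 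Setting $\Psi(f_F)=((h_\gamma)_{\gamma},g)$, surjectivity onto $H\wr K$ follows by reversing the formula: given $g\in K$ and arbitrary $(h_\gamma)\in H^\Gamma$, the bijections $f_{\gamma,\gamma^g}:=t_\gamma^{-1}h_\gamma t_{\gamma^g}$ lie in $H_{\gamma,\gamma^g}$ and form a system $F\in\mathcal F(g)$ with $\Psi(f_F)=((h_\gamma),g)$; injectivity holds because $f_F$ determines $g$ and each component $f_{\gamma,\gamma^g}$, hence each $h_\gamma$.

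Finally, composing two such imprimitive actions reproduces the wreath multiplication $((h_\gamma),g)((h'_\gamma),g')=((h_\gamma h'_{\gamma^g}),gg')$, and the imprimitive representation of $H\wr K$ on $\gamma_0 D\times\Gamma$ is faithful because $\gamma_0 D\neq\varnothing$ (as $D\neq\varnothing$ and all fibers have equal size $m\ge 1$) and both $H$ and $K$ act faithfully. Hence $\Phi^{-1}W\Phi$ is exactly the image of this representation, which is a group, so $W$ is a permutation group and $\Phi$ together with $\Psi$ gives a permutational isomorphism $W\cong H\wr K=H_{\gamma_0,\gamma_0}\wr K$, establishing both assertions. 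The conceptual crux is recognizing the coset/groupoid structure hidden in~\eqref{property} and trivializing it through the representatives $t_\gamma$; once $\Phi$ is in place the remaining steps are the routine bookkeeping of the imprimitive wreath action, the only delicate point being the finiteness argument that forces all $H_{\gamma,\gamma'}$ to be equal-sized cosets of $H_{\gamma,\gamma}$.
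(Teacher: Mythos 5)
Your proof is correct and takes essentially the same route as the paper's: fix a base vertex, choose representatives $t_\gamma\in H_{\gamma_0,\gamma}$, and conjugate $W$ by the resulting bijection onto $\gamma_0 D\times\Gamma$ so that the cocycle identity~\eqref{property} identifies the conjugate with the imprimitive action of $H_{\gamma_0,\gamma_0}\wr K$. You are merely more thorough, supplying the groupoid preliminaries (e.g.\ that inverses of elements of $H_{\gamma,\gamma'}$ lie in $H_{\gamma',\gamma}$, which the paper uses implicitly) and the reverse inclusion that the paper dismisses as ``proved similarly.''
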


\begin{proof}
Let $\gamma\in \Gamma$. For every $\gamma^{\prime}\in \Gamma$, we fix
an arbitrary $h_{\gamma,\gamma^{\prime}}\in
H_{\gamma,\gamma^{\prime}}$. Then the mapping
$$\varphi:\Gamma\times \gamma D\rightarrow D,~(\gamma^{\prime},\delta)\mapsto(\gamma^{\prime},\delta^{h_{\gamma,\gamma^{\prime}}})$$
is obviously a bijection. It suffices to show that $\varphi W\varphi^{-1}=H_{\gamma,\gamma}\wr K$. To this end, let $f=f_{F}\in W$, where $F=\{f_{\gamma^{\prime},(\gamma^{\prime})^g}:~\gamma^{\prime}\in \Gamma\}\in \mathcal{F}(g)$ and $g\in K$. Then for any $(\gamma^{\prime},\delta)\in \Gamma \times \gamma D$, we have 
$$
(\gamma^{\prime},\delta)^{\varphi f\varphi^{-1}}=
(\gamma^{\prime},\delta^{h_{\gamma,\gamma^{\prime}}})^{f\varphi^{-1}}=((\gamma^{\prime})^g, \delta^{h_{\gamma,\gamma^{\prime}}f_{\gamma^{\prime},(\gamma^{\prime})^g}})^{\varphi^{-1}}
$$
$$
=((\gamma^{\prime})^g,\delta^{h_{\gamma,\gamma^{\prime}}f_{\gamma^{\prime},(\gamma^{\prime})^g}h_{(\gamma')^g,\gamma}})
=((\gamma^{\prime})^g,\delta^{f^{\prime}}),
$$ 
where $f=h_{\gamma,\gamma'}f_{\gamma',(\gamma')^g}h_{(\gamma')^g,\gamma}$. In view of Eq.~\eqref{property}, we have $f'\in H_{\gamma,\gamma}$. Thus, $\varphi f\varphi^{-1}\in H_{\gamma,\gamma}\wr K$ and hence $\varphi W\varphi^{-1}\leq H_{\gamma,\gamma}\wr K$. The reverse inclusion is proved similarly. 
\end{proof}

\subsection{Algorithms}

Let $X$ be a colored digraph with vertex set $\Omega$ of cardinality~$n$
and $\tau$ a linear ordered set of subsets of~$\Omega$. Using the well-known
\emph{$2$-dimensional Weisfeiler-Leman algorithm}~\cite{Weis}, one can
construct in time~$\poly(n)$ a colored digraph
$X^{\prime}=\WL_2(X,\tau)$ such that every vertex-color class of $X$
and every element of~$\tau$ is a union of some vertex-color classes
of~$X^{\prime}$ and each arc-color class of $X$ is a union of some
arc-color classes of $X^{\prime}$. Moreover, if $\tau$ is
$\aut(X)$-invariant, then
\begin{equation}\label{automorphgraphs}
X^{\prime}=\WL_2(X,\tau)\succeq X,
\end{equation}
where the relation~$\succeq$ is as in Subsection~2.1.

In the following lemma, we collect some known results; the permutation groups of degree $n$ in the statement (as well throughout the rest of the paper) are given by generating sets of size~$\poly(n)$.

\begin{lemm}\label{solvable}
Let $K\leq \sym(\Omega)$ be a solvable group. Then the problems 
\begin{enumerate}
\item[{\rm (1)}] given a colored digraph~$X$, find the group $\aut(X)\cap K$,

\item[{\rm (2)}]  given a hypergraph~$\mathcal{H}$, find the group $\aut(\mathcal{H})\cap K$,

\item[{\rm (3)}]  given colored tournaments $T$  and $T'$, find the set $\iso(T,T^{\prime})$,
\end{enumerate}
can be solved in time~$\poly(n)$, $\poly(m)$, and $n^{O(\log n)}$, respectively, where $n$ is the vertex number of $X$ and $T$, and $m$ is the size of $\mathcal{H}$.
\end{lemm}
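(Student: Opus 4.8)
The plan is to handle the three items separately; in each the mathematical substance is an established algorithm that I would invoke as a black box, so what I would actually supply are short reductions together with the observation that the groups involved are solvable.

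For item~(2) I would directly cite the algorithm of~\cite{Miller}, which given a hypergraph $\mathcal H$ and a solvable group $K$ computes $\aut(\mathcal H)\cap K$ in time polynomial in the size $m$ of $\mathcal H$. This is the genuinely hard ingredient and the step I do not expect to shortcut: it is exactly here that solvability of $K$ is indispensable, since for an unrestricted $K$ the problem is already as hard as graph isomorphism, and the polynomial bound relies on the recursion descending through a composition series of $K$ whose factors are all cyclic.

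For item~(1) I would reduce to item~(2) via iterated setwise stabilization. A color-preserving automorphism $g\in K$ of $X$ is precisely an element of $K$ that stabilizes, setwise, every vertex-color class $\Delta\subseteq\Omega$ and every arc-color class $s\subseteq\Omega\times\Omega$; the linear orderings of colors impose no further constraint, since respecting them amounts exactly to sending each class to the class of the same index, i.e.\ to itself. Now the setwise stabilizer of a single subset $\Delta$ inside a group is an instance of item~(2): the one-hyperedge hypergraph $(\Omega,\{\Delta\})$ has $\aut(\mathcal H)\cap K=\{g\in K:\ \Delta^g=\Delta\}$. I would therefore start from $K$ and intersect successively with the setwise stabilizer of each color class, handling the arc classes through the induced action of the current group on $\Omega\times\Omega$, which is faithful and, being isomorphic to a subgroup of $K$, solvable. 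As there are $O(n^2)$ classes and each stabilizer costs $\poly(n)$, the whole computation of $\aut(X)\cap K$ runs in $\poly(n)$.

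For item~(3) I would appeal to the tournament isomorphism algorithm of~\cite{BL}. Its structural input is that the automorphism group of a (colored) tournament has odd order: an automorphism of even order would have an involutory power, which must transpose some pair $\{\alpha,\beta\}$ and hence reverse the unique arc between them, contradicting arc-preservation. By the Feit--Thompson theorem $\aut(T)$ is then solvable, and this is what lets the group-theoretic divide-and-conquer of~\cite{BL} produce the coset $\iso(T,T')$ in time $n^{O(\log n)}$; alternatively, the quasipolynomial algorithm of~\cite{Babai} applies verbatim. I would simply cite this. Thus the only irreducibly difficult steps are the imported results of~\cite{Miller} and~\cite{BL}, the rest being the two routine reductions above.
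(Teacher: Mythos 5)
Your proposal is correct, and for items (2) and (3) it coincides with the paper's proof, which simply cites Miller's algorithm \cite{Miller} (with the solvability analysis from \cite{BCP}) and the tournament isomorphism algorithm of \cite[Theorem~4.1]{BL}, respectively; your remarks on why $\aut(T)$ has odd order and is therefore solvable are the standard justification underlying the latter. The only genuine divergence is item (1): the paper cites \cite[Corollary~3.6]{BL} as a black box, whereas you give a self-contained reduction to item (2) by iteratively intersecting $K$ with the setwise stabilizers of the vertex-color classes (as one-hyperedge hypergraphs on $\Omega$) and of the arc-color classes (via the induced action on $\Omega\times\Omega$, which is faithful, so the acting group stays solvable and Miller's algorithm applies at each of the $O(n^2)$ steps). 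This costs you nothing asymptotically and has the advantage of deriving (1) from (2) rather than importing a second result from \cite{BL}; the paper's citation is shorter but less transparent. Either route is acceptable.
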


\begin{proof}
The algorithms for Problems~$(1)$ and~$(3)$ are well-known and can be
found in~\cite[Corollary~3.6]{BL} and in~\cite[Theorem~4.1]{BL},
respectively. For Problem~$(2)$, one can use Miller's
algorithm~\cite[Section~2]{Miller}, which runs in polynomial time if the group~$K$ is solvable (see~\cite{BCP}).
\end{proof}

\section{Auxiliary algorithm}

In this section, we construct an algorithm to be used for the proof of
Theorem~\ref{main}.  The general idea goes back to the Babai-Luks
procedure of finding a canonical labeling for a bipartite graph with
respect to a group action on one of the parts~\cite{BL}. However, the
algorithm below uses more information from the input graph which, in
turn, restricts the output group. Below, the restriction of a permutation group $G\le\sym(\Omega)$ 
to  a $G$-invariant subset $\Delta\subset \Omega$ is denoted by $G^\Delta$.\medskip

\begin{center}
\textbf{Algorithm AUX}
\end{center}
\medskip

\noindent\textbf{Input:} A colored digraph $X$ with two vertex-color classes
$\Gamma$, $\Delta$, and also  $X_{\Delta}$ is a tournament, a nonempty
intersection $D$ of some arc-color class with $\Gamma\times \Delta$, and
a group $K\geq \aut(X)^{\Gamma}$.\medskip

\noindent \textbf{Output:} Either a colored digraph $X^{\prime}\succ
X$, or a group $L\leq \sym(D)$  and a
homomorphism $\varphi:L\rightarrow \sym(\Delta)$ such that
$L^{\varphi}\geq \aut(X)^{\Delta}$.\medskip

\noindent\textbf{Step 1.} For every $\gamma\in \Gamma$, we use the
bijection $\psi:\gamma D\rightarrow \{\gamma\}\times\gamma D$, $\delta
\mapsto (\gamma,\delta)$, to construct the tournament
$X(\gamma)=(X_{\gamma D})^{\psi}$ on $\{\gamma\}\times \gamma
D$. \medskip

\noindent\textbf{Step 2.} For all $\gamma,\gamma^{\prime}\in \Gamma$,
find $H_{\gamma,\gamma^{\prime}}=\iso(X(\gamma),X(\gamma^{\prime}))$
by the algorithm from Lemma~\ref{solvable}(3). Construct the
partition~$\pi$ of $\Gamma$ such that $\gamma$ and $\gamma^{\prime}$
are in the same class of $\pi$ if and only if
$H_{\gamma,\gamma^{\prime}}\neq \varnothing$. \medskip
 
\noindent\textbf{Step 3.} If $|\pi|>1$, then output
$X^{\prime}=\WL_2(X,\pi)$. \medskip

\noindent\textbf{Step 4.} Construct the permutation group $W=W(X,K)$
on~$D$ by Eq.~\eqref{wreath} and the hypergraph $\mathcal{H}$ on $D$
with hyperedge set $E=\{\delta D^* \times \{\delta\}:~\delta\in
\Delta\}$, where $D^*=\{(\delta,\gamma):\ (\gamma,\delta)\in D\}$. \medskip

\noindent\textbf{Step 5.} Output the group $L=\aut(\mathcal{H})\cap W$
found by the algorithm from Lemma~\ref{solvable}(2), and the
homomorphism $\varphi:L\rightarrow \sym(\Delta)$ induced by the
bijection $E\rightarrow \Delta$, $\delta D^*\times \{\delta\}
\mapsto\delta$.\medskip

Denote by $n=|\Delta\cup\Gamma|$ the vertex number of~$X$, and by $k$ the maximal outdegree of the digraph with vertex set $\Delta\cup \Gamma$ and arc set~$D$.

\begin{prop}\label{aux}
Algorithm AUX correctly computes the stated output. Moreover, if $K$
is a solvable group, then $L$ is also solvable and the algorithm runs
in time $k^{O(\log k)}\poly(n)$.
\end{prop}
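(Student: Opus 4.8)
The plan is to analyze the two possible outputs separately, and then dispose of solvability and running time. Begin with the Step~3 branch. Every $\sigma\in\aut(X)$ stabilizes the vertex-color class $\Gamma$ and the arc-color class containing $D$, so for each $\gamma\in\Gamma$ it carries $\gamma D$ onto $\gamma^\sigma D$ and induces an isomorphism $X(\gamma)\to X(\gamma^\sigma)$ of the tagged tournaments. Hence $\gamma$ and $\gamma^\sigma$ always lie in the same class of $\pi$, i.e.\ $\pi$ is $\aut(X)$-invariant. If $|\pi|>1$, then the classes of $\pi$ form a proper $\aut(X)$-invariant refinement of the one-class partition of $\Gamma$, so by~\eqref{automorphgraphs} the digraph $X'=\WL_2(X,\pi)$ satisfies $X'\succeq X$ while splitting $\Gamma$ into at least two color classes, whence $X'\neq X$ and $X'\succ X$. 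This is the easy branch, and in it Steps~4--5 are never reached.

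Now the main branch, where $|\pi|=1$, so every $H_{\gamma,\gamma'}=\iso(X(\gamma),X(\gamma'))$ is nonempty. Regarded (after removing the tag) as sets of bijections $\gamma D\to\gamma'D$, these are isomorphism cosets of tournaments and therefore satisfy $H_{\gamma,\gamma'}H_{\gamma',\gamma''}=H_{\gamma,\gamma''}$, i.e.\ \eqref{property} holds; thus Lemma~\ref{wr} applies and $W=W(X,K)$ is a permutation group on $D$ with $W\cong H_{\gamma,\gamma}\wr K$. The key point is to show $\aut(X)^\Delta\le L^\varphi$. Fix $\sigma\in\aut(X)$ and let $g=\sigma^\Gamma$, $h=\sigma^\Delta$ be its restrictions; since $K\ge\aut(X)^\Gamma$ we have $g\in K$. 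The induced permutation $\sigma^D$ of $D$ sends $(\gamma,\delta)\mapsto(\gamma^g,\delta^h)$, and for each $\gamma$ the map $\delta\mapsto\delta^\sigma$ restricts to a member of $H_{\gamma,\gamma^g}$; hence $\sigma^D=f_F$ for the system $F=\{\sigma|_{\gamma D}\}_{\gamma\in\Gamma}\in\cF(g)$, so $\sigma^D\in W$. Moreover $\sigma^D$ carries the hyperedge $\delta D^*\times\{\delta\}$, which is exactly the fiber of $D$ over $\delta$, onto the fiber over $\delta^h$, so $\sigma^D\in\aut(\mathcal{H})$ and therefore $\sigma^D\in L=\aut(\mathcal{H})\cap W$. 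Since $\varphi$ reads off the induced permutation of these fibers, $\varphi(\sigma^D)=h=\sigma^\Delta$, giving $\aut(X)^\Delta\le L^\varphi$ as required. I expect this simultaneous membership argument---checking that restriction to $D$ lands inside the wreath product $W$ and inside $\aut(\mathcal{H})$ at once, and that $\varphi$ recovers the $\Delta$-action---to be the heart of the proof.

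For solvability, note that $H_{\gamma,\gamma}=\aut(X(\gamma))$ is the automorphism group of a tournament, hence of odd order and solvable by Feit--Thompson; if $K$ is solvable then so is $H_{\gamma,\gamma}\wr K\cong W$, and $L\le W$ is solvable. Finally, the timing: Steps~1 and~3 cost $\poly(n)$. In Step~2 each $X(\gamma)$ has at most $k$ vertices by the outdegree bound, so each of the $O(n^2)$ calls to Lemma~\ref{solvable}(3) costs $k^{O(\log k)}$, contributing $k^{O(\log k)}\poly(n)$, which dominates. Generators of $W$ are assembled in $\poly(n)$ time from generators of $K$, one chosen representative in each $H_{\gamma,\gamma^g}$, and generators of the small group $H_{\gamma,\gamma}$ (again via Lemma~\ref{solvable}(3) on a tournament with at most $k$ vertices). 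The hypergraph $\mathcal{H}$ has size $m=\poly(n)$, and since $W$ is solvable, Lemma~\ref{solvable}(2) computes $L=\aut(\mathcal{H})\cap W$ in $\poly(m)=\poly(n)$ time. Summing yields the claimed $k^{O(\log k)}\poly(n)$ bound.
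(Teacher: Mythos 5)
Your proof is correct and follows essentially the same route as the paper's: invariance of $\pi$ plus \eqref{automorphgraphs} for the Step~3 branch, the membership argument $\aut(X)^D\le W\cap\aut(\mathcal{H})=L$ with $\varphi$ recovering the $\Delta$-action for the Step~5 branch, solvability via $W\cong\aut(X(\gamma))\wr K$, and the same cost accounting (Step~2 dominating with $k^{O(\log k)}$ per pair, everything else $\poly(n)$). Your explicit observation that the restriction $\sigma|_{\gamma D}$ itself furnishes the representative in $H_{\gamma,\gamma^g}$ is a slightly cleaner justification of the same step in the paper.
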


\begin{proof}
To prove the correctness assume first that the algorithm terminates at
Step~$3$. Obviously, $\pi$ is $\aut(X)$-invariant and hence
$X^{\prime}\succeq X$ by Eq.~\eqref{automorphgraphs}. Moreover,
$X^{\prime}\succ X$, because~$|\pi|>1$.

Now let the algorithm terminates at Step~$5$. Note that $\aut(X)$ acts
on $D$. We claim that
\begin{equation}\label{inclusion}
\aut(X)^D\leq W,
\end{equation}
where $W$ is the group found at Step~4. Indeed, let $a\in \aut(X)$. Since $K\geq \aut(X)^{\Gamma}$, we have $\gamma^a=\gamma^k$ for some $k\in K$ and all $\gamma\in \Gamma$. Now let $(\gamma,\delta)\in D$. Then $\delta^a\in (\gamma D)^a=\gamma^a D=\gamma^k D$. At Step~4, the set $H_{\gamma,\gamma^k}=\iso(X(\gamma),X(\gamma^k))$ is nonempty and hence $\delta^a=\delta^{f_{\gamma,\gamma^k}}$ for some $f_{\gamma,\gamma^k}\in H_{\gamma,\gamma^k}$. Therefore,
$$(\gamma,\delta)^a=(\gamma^k,\delta^{f_{\gamma,\gamma^k}}).$$
This is true for all $(\gamma,\delta)\in D$. Consequently, the permutation $a\in \sym(D)$ belongs to $W$ (see Lemma~\ref{wr}). Thus, Eq.~\eqref{inclusion} holds. 

By the definition of~$D$, the set $E$ defined at Step~4 is $\aut(X)^D$-invariant. Therefore, $\aut(X)^D\leq \aut(\mathcal{H})$, where $\mathcal{H}$ is the hypergraph defined at the same step. In view of Eq.~\eqref{inclusion}, this yields that 
$$\aut(X)^D\leq W\cap \aut(\mathcal{H})=L.$$ 
Thus, $\aut(X)^{\Delta}=(\aut(X)^D)^{\varphi}\leq L^{\varphi}$, which completes the proof of correctness. 

To estimate the running time of Algorithm AUX, we note that Steps~$1$,
$3$, and~$4$ can obviously be implemented in time~$\poly(n)$.  By
Lemma~\ref{solvable}(3), Step~2 can be done in time~$k^{O(\log
  k)}m^2+\poly(m)$, where $m=|\Gamma|$. Next, the group $W$ is
permutationally isomorphic to the wreath product~$\aut(X(\gamma)) \wr
K$ for every $\gamma\in \Gamma$ by Lemma~\ref{wr}. Note that the group
$\aut(X(\gamma))$ is solvable because $X(\gamma)$ is a
tournament. Therefore, if $K$ is solvable, so are $W$ and $L\leq W$.
Besides, the hypergraph~$\mathcal{H}$ has $|D|=mk$ vertices,
$|\Delta|\leq mk$ hyperedges, and each hyperedge consists of at
most~$m$ vertices. Therefore, the size of $\mathcal{H}$ is polynomial
in~$n$. Thus, the group $L=\aut(\mathcal{H})\cap W$ (and hence the
homomorphism $\varphi$) can be found in time~$\poly(n)$ by
Lemma~\ref{solvable}(2) and, consequently, Step~5 terminates within
the same time bound.
\end{proof}

\section{The proof of Theorem~\ref{main}}
 
We divide the proof of Theorem~\ref{main} into two parts. First, we prove Theorem~\ref{main2} in which a fixed-parameter tractable algorithm for finding the automorphism group of a $k$-spanning colored tournament is constructed. Then we derive Theorem~\ref{main} from Theorem~\ref{main2}.

\begin{theo}\label{main2}
Let $k\geq 1$, $X$ a $k$-spanning colored tournament with~$n$ vertices, and $m$ the minimal size of a vertex-color class of $X$. Then one can find the group $\aut(X)$ in time~$(m^{O(\log m)}+k^{O(\log k)})\poly(n)$.
\end{theo}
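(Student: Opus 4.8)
The plan is to compute $\aut(X)$ by growing a known subgroup of the automorphism group one vertex-color class at a time, using Algorithm AUX as the engine that transports a group action across an arc-color class while keeping everything solvable. First I would invoke the $k$-spanning hypothesis: the digraph on $\Omega$ with arc set $s_k$ (the union of arc-color classes of maximal valency at most $k$) is strongly connected. Concretely this means that, starting from any single vertex-color class, one can reach every other vertex-color class by following arc-color classes of bounded valency, so the connectivity structure gives a traversal order of the color classes. I would fix a minimum-size vertex-color class $\Gamma_0$ (of size $m$) as the seed: on the induced tournament $X_{\Gamma_0}$ one can directly compute $\aut(X_{\Gamma_0})$ by Lemma~\ref{solvable}(3) in time $m^{O(\log m)}$, and this solvable group is a first overgroup for the action of $\aut(X)$ on $\Gamma_0$.

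The core iterative step is as follows. Suppose we have processed a set of color classes whose union is $\Lambda$, and we hold a solvable group $K \ge \aut(X)^\Lambda$ (presented by generators of polynomial size). To absorb a new class $\Delta$, we use the $k$-spanning property to find an already-processed class $\Gamma \subseteq \Lambda$ joined to $\Delta$ by an arc-color class of maximal valency at most $k$, giving a nonempty $D \subseteq \Gamma \times \Delta$ with all outdegrees at most $k$. Now I would run Algorithm AUX on the pair $(\Gamma, \Delta)$ with input group $K^\Gamma \ge \aut(X)^\Gamma$. By Proposition~\ref{aux}, AUX either refines the coloring, returning some $X' \succ X$ (in which case we restart the whole procedure on $X'$, noting by Eq.~\eqref{automorphgraphs} that $\aut(X')=\aut(X)$ and that the number of color classes strictly increases, so this can happen only polynomially often), or it outputs a solvable group $L \le \sym(D)$ with a homomorphism $\varphi$ satisfying $L^\varphi \ge \aut(X)^\Delta$, in the parameter-bounded time $k^{O(\log k)}\poly(n)$. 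Either way, we obtain a solvable overgroup of the action of $\aut(X)$ on the enlarged set $\Lambda \cup \Delta$, and AUX guarantees that solvability of the input group is inherited by the output because each $\aut(X(\gamma))$ is a tournament automorphism group and hence solvable.

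Once all classes are absorbed, I would have a solvable group $K$ with $K \ge \aut(X)^\Omega = \aut(X)$ acting on the whole vertex set. The final move is to intersect: apply Lemma~\ref{solvable}(1) to compute $\aut(X) \cap K = \aut(X)$ in time $\poly(n)$, recovering the exact automorphism group from the solvable overgroup. Summing the costs, the seed computation contributes $m^{O(\log m)}$, each of polynomially many AUX calls contributes $k^{O(\log k)}\poly(n)$, and the final intersection is polynomial, yielding the claimed bound $(m^{O(\log m)}+k^{O(\log k)})\poly(n)$.

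The step I expect to be the main obstacle is making the transport of the overgroup rigorous and efficient across the whole traversal, rather than just across a single bipartite pair. AUX as stated produces an overgroup for $\aut(X)^\Delta$ given an overgroup for $\aut(X)^\Gamma$, but the automorphisms must act \emph{simultaneously and coherently} on $\Lambda$ and on $\Delta$; I need to assemble the separate overgroups on $\Gamma$ and on $\Delta$ into a single overgroup on $\Lambda \cup \Delta$ whose projections are compatible. The clean way to handle this is to always work with the full induced colored digraph on the processed vertices and to feed AUX the restriction of the current global overgroup, then lift the resulting constraint back to the global group; checking that this lift preserves the overgroup relation $K \ge \aut(X)^\Lambda$, keeps $K$ solvable, and controls the generator count throughout is the delicate bookkeeping. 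A secondary subtlety is that a refinement at Step~3 of AUX forces a restart, so I must argue termination by a potential-function argument on the number of color classes (bounded by $n$), ensuring the restarts do not blow up the running time beyond the stated bound.
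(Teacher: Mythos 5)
Your proposal matches the paper's proof essentially step for step: seed with a minimum-size vertex-color class via Lemma~\ref{solvable}(3), absorb new classes using the $k$-spanning property (Lemma~\ref{classes}) and Algorithm AUX with input $K^{\Gamma}$, restart on a refinement (which can happen only polynomially often), and finish by intersecting with Lemma~\ref{solvable}(1). The ``main obstacle'' you flag dissolves because only an overgroup is needed: the paper simply sets $K:=K\times L^{\varphi}$, and the containment $\aut(X)^{\Sigma\cup\Delta}\leq \aut(X)^{\Sigma}\times\aut(X)^{\Delta}\leq K\times L^{\varphi}$ requires no coherence between the two factors.
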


\begin{proof}
In what follows, $\Omega$ is the vertex set of~$X$.
It suffices to construct a solvable group $K\geq \aut(X)$ in the required time: indeed, then the group $\aut(X)$ can be found inside $K$ in time $\poly(n)$ by Lemma~\ref{solvable}(1). To describe the algorithm constructing~$K$, we need an auxiliary lemma.

\begin{lemm}\label{classes}
Let $\Sigma$ be a proper union of vertex-color classes of~$X$. Then there exist vertex-color classes $\Gamma$ inside $\Sigma$ and $\Delta$ outside $\Sigma$, and an arc-color class of $X$ of maximal valency at most~$k$ whose intersection~$D$ with $\Gamma\times \Delta$ is nonempty.
\end{lemm}

\begin{proof}
Since $X$ is $k$-spanning, there exist $\alpha\in \Sigma$ and $\beta\in \Omega\setminus \Sigma$ such that the arc $(\alpha,\beta)$ belongs to some color class~$s$ of maximal valency at most~$k$. Let $\Gamma$ and $\Delta$ be the vertex-color classes of $X$ containing $\alpha$ and $\beta$, respectively. Then $\Gamma\subseteq \Sigma$ and $\Delta\subseteq \Omega\setminus \Sigma$. This completes the proof with $D=s\cap (\Gamma\times \Delta)$.
\end{proof}

The algorithm starts with finding a vertex-color class $\Sigma$ of $X$ of size~$m$, and the group $K=\aut(X_{\Sigma})$ by the algorithm from Lemma~\ref{solvable}(3). Since $X$ and hence $X_{\Sigma}$ is a tournament, the group $K$ is solvable and the cost of this step is essentially $m^{O(\log m)}$. Next, the algorithm proceeds as follows:\medskip

\quad\textbf{while} $\Sigma\neq \Omega$ \textbf{do}\smallskip

\begin{verse}

\noindent find classes $\Gamma$, $\Delta$ and a set $D$ as in Lemma~\ref{classes}; apply the algorithm AUX to $Y=X_{\Gamma\cup\Delta}$, $\Gamma,\Delta,D$, and the group $K^{\Gamma}$;\medskip

\noindent \textbf{if} the output of AUX is a digraph $Y'$, \medskip

\noindent \textbf{then} change $X$ by the tournament obtained by replacing the vertex-color classes $\Gamma$ and $\Delta$ with those of $Y'$;  \textbf{break}; \medskip

\noindent // now the output of AUX is $L\leq \sym(D)$ and $\varphi:L\rightarrow \sym(\Delta)$\medskip

\noindent \textbf{else} $\Sigma:=\Sigma \cup \Delta$ and $K:=K\times L^{\varphi}$; 
\end{verse}

\quad\textbf{od} \medskip

\noindent If the above loop terminates with break, then we repeat the whole procedure with the new colored tournament~$X$. Since obviously $X^{\prime}\succ X$, such a repetition occurs at most~$n^2$ times. Moreover, at each iteration of the loop we have $K\geq \aut(X)^{\Sigma}$: at the zero step this is clear, and then this follows from the inclusion $L^{\varphi}\geq \aut(X)^{\Delta}$. Thus, at the end, 
$$K\geq \aut(X).$$
Further, at each iteration of the loop the group $K$ is solvable by induction and the fact that $L^{\varphi}$ is solvable (see Proposition~\ref{aux}). Therefore, the cost of each repetition is at most  $m^{O(\log m)}+k^{O(\log k)}\poly(n)$.
\end{proof}

It is well known that the problems of finding the set of all
isomorphisms and the automorphism group are polynomial-time equivalent
in the class of all graphs. However, the standard proof of this
equivalence does not work for $k$-spanning colored
tournaments. Lemma~\ref{reduction}, proved below and based on an old
observation of Babai, is sufficient to deduce Theorem~\ref{main} from
Theorem~\ref{main2} for $m=1$.

\begin{lemm}\label{reduction}
Given $k$-spanning colored tournaments $X$ and $Y$ with~$n$ vertices,
one can efficiently construct a set $\mathcal{T}=\mathcal{T}(X,Y)$ of
$n$ colored tournaments such that
\begin{enumerate}
\item[{\rm(a)}] each tournament from $\mathcal{T}$ has $3n+1$ vertices, is
  $k'$-spanning with $k'=\max \{3,k\}$, and has a singleton vertex-color
  class,
\item[{\rm(b)}]  the set $\iso(X,Y)$ can efficiently be found inside the set
  $\bigcup \limits_{T\in \mathcal{T}} \aut(T)$.
\end{enumerate}
\end{lemm}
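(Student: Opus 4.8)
The plan is to realize $\iso(X,Y)$ as a union of pieces, each of which can be read off from the automorphism group of a single colored tournament. The obvious graph-theoretic reduction — form the disjoint union $X\sqcup Y$ and look for automorphisms interchanging the two copies — fails here, and explaining why is the whole point: an involution swapping two copies of a tournament would have to reverse the direction of every arc joining them, which is impossible in a tournament. This is precisely the obstruction that Babai's observation circumvents by replacing the order-$2$ symmetry with an odd one: a consistently oriented directed triangle admits the cyclic group $\mZ_3$ as automorphisms, whereas a transposition would reverse its orientation. So the idea is to glue $X$ and $Y$ around a directed $3$-cycle rather than around a single edge.

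Concretely, I would fix one vertex $u$ of $X$ and, for each vertex $v$ of $Y$, build one tournament $T_v$ on $A_0\sqcup A_1\sqcup A_2\sqcup\{*\}$, where $A_0$ carries a copy of $X$ and $A_1,A_2$ carry two copies of $Y$; this gives $3n+1$ vertices and $|\mathcal{T}|=|V(Y)|=n$. Between the layers I place all arcs from $A_0$ to $A_1$, from $A_1$ to $A_2$, and from $A_2$ to $A_0$, forming a blow-up of a directed triangle, and I give all of these arcs one common arc-color. Then the tripartition into layers coincides with the twin classes of that color class, hence is $\aut(T_v)$-invariant, while the cyclic orientation forces the induced action on $\{A_0,A_1,A_2\}$ to lie in $\mZ_3$ (a reflection would reverse an arc). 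Within the layers I keep the original vertex- and arc-colors of $X$ and $Y$, identified through a common color set — if the color sets differ then $\iso(X,Y)=\varnothing$ — and crucially I do \emph{not} color the layers apart, so that a layer rotation can still be color-preserving. The apex $*$ receives its own color, supplying the required singleton vertex-color class, with the three arcs $*\to u$, $*\to v$, $*\to v'$ to $u\in A_0$ and to the two copies $v\in A_1$, $v'\in A_2$ of the chosen vertex of $Y$, and all remaining $3n-3$ arcs oriented into $*$. These apex arcs have maximal valency $3$ and $1$; together with the strongly connected low-valency subgraphs inherited from the two $k$-spanning copies, they span a strongly connected digraph, so $T_v$ is $k'$-spanning with $k'=\max\{3,k\}$. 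This establishes (a).

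For (b) I would use the homomorphism $\aut(T_v)\to\mZ_3$ recording the layer rotation. Its nonidentity coset, restricted to the leg $A_0\to A_1$, consists exactly of the color-preserving isomorphisms $X\to Y$ sending $u$ to $v$: soundness is immediate, since such a rotation preserves within-layer colors and the apex out-neighborhood $\{u,v,v'\}$; and ontoness holds because any $\phi\in\iso(X,Y)$ with $u^\phi=v$ extends to an order-$3$ automorphism by taking a fixed identification $\iota\colon A_1\to A_2$ on the second leg and $\iota^{-1}\phi^{-1}$ on the third, whose triple product is the identity on $A_0$. Taking the union over $v\in V(Y)$ recovers all of $\iso(X,Y)$, and each piece is extracted from generators of $\aut(T_v)$ in polynomial time; constructing the $n$ tournaments is likewise polynomial.

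The main obstacle I expect is not the isomorphism bookkeeping but verifying that the gadget meets all four requirements simultaneously: that $T_v$ is genuinely a tournament with an $\aut$-invariant layer partition on which the action is forced into $\mZ_3$, so no spurious orientation-reversing symmetry appears, while at the same time the layers are left color-indistinct so that honest rotations survive, and the low-valency arcs — only the valency-at-most-$3$ arcs around the apex, plus the spanning low-valency classes inherited from $X$ and $Y$ — already make the whole digraph strongly connected. Pinning down the twin-class argument for layer-invariance and the valency bookkeeping behind the $k'$-spanning property are the delicate points.
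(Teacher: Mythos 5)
Your proposal is correct and follows essentially the same route as the paper: fix a vertex of $X$, and for each vertex of $Y$ build a $(3n+1)$-vertex gadget consisting of three layers glued along a blow-up of a directed $3$-cycle, plus a singleton-colored apex whose three specially colored arcs mark the chosen vertices, so that $\iso(X,Y)$ is read off from the layer-rotating cosets of the $\aut(T_v)$. The only deviations (using a second copy of $Y$ rather than of $X$ for the third layer, and orienting the non-marked apex arcs inward) are immaterial to the argument.
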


\begin{proof} 
Let us fix a vertex $\alpha$ of $X$. For every vertex $\beta$ of
  $Y$ we define a tournament~$T=T_{\beta}$ as the disjoint union of
  $X$, $Y$, $X^{\prime}=X$ with attached new vertex~$\mu$, and the sets
\begin{equation}\label{colors}
(\Omega \times \Delta)\cup (\Delta \times \Omega^{\prime})\cup (\Omega^{\prime}\times \Omega)~\text{and}~\{\mu\}\times (\Omega \cup \Delta \cup \Omega^{\prime})
\end{equation}
of new arcs, where $\Omega$, $\Delta$, and $\Omega^{\prime}$ are the
vertex sets of $X$, $Y$, and $X^{\prime}$, respectively. The colors of
vertices in $\Omega \cup \Delta \cup \Omega^{\prime}$ are the
same as in $X$, $Y$, and $X^{\prime}$, and $\mu$ gets a new
color. Thus, $T$ contains $3n+1$ vertices and the singleton
vertex-color class~$\{\mu\}$.

The colors of arcs inside $X$, $Y$, and $X^{\prime}$ are not changed,
the arcs from the first set from Eq.~\eqref{colors} get a new color,
and the arcs in the second set are divided into two classes one of
which consists of the arcs $(\mu,\alpha)$, $(\mu,\beta)$, and
$(\mu,\alpha')$, where $\alpha'$ is the copy of~$\alpha$ in~$X'$. 
Since the union of the latter class with the
arc-color classes of $X$ and $Y$ of valency at most~$k$ is obviously a
connected relation, the colored tournament~$T$ is $k'$-spanning. Thus,
the set $\mathcal{T}=\{T_{\beta}:~\beta\in \Delta\}$ satisfies
condition~$(a)$. \medskip

\textbf{Claim.} $X$ and $Y$ are isomorphic if and only if there exist
$\beta\in \Delta$ and $f\in\aut(T_{\beta})$ such that
$\alpha^f=\beta$, $\beta^f=\alpha^{\prime}$, and
$(\alpha^{\prime})^f=\alpha$.

\begin{proof}
To prove the ``if'' part, it suffices to show that
$\Omega^f=\Delta$. Assume that $\gamma^f\notin \Delta$ for some
$\gamma\in \Omega$. Since $Y$ is a tournament, $(\alpha,\gamma)$ or $(\gamma,\alpha)$ is an arc of $T_{\beta}$; for definiteness, assume the first. Then the arcs
$(\alpha,\gamma)$ and $(\alpha^f,\gamma^f)=(\beta,\gamma^f)$ are in
different arc-color classes of $T_{\beta}$. It follows that
$f\notin\aut(T_{\beta})$, a contradiction. Conversely, let $f_0\in
\iso(X,Y)$ and $\beta=\alpha^{f_0}$. Then the permutation $f$ on the
union $\Omega \cup \Delta\cup \Omega^{\prime}\cup \{\mu\}$, defined by
$$\mu^f=\mu,\qquad f^{\Omega}=f_0, \qquad f^{\Delta}=f_0^{-1}, \qquad f^{\Omega^{\prime}}=\id,$$ 
is obviously an automorphism of $T_{\beta}$.
\end{proof}

To complete the proof, for any $\beta\in \Delta$ denote by $H_{\beta}$
the set consisting of all permutations of $\aut(T_{\beta})$ taking
$\alpha$, $\beta$, $\alpha^{\prime}$ to $\beta$, $\alpha^{\prime}$,
$\alpha$, respectively (note that if $H_{\beta}\neq \varnothing$, then
$H_{\beta}$ is a coset of the point stabilizer
$\aut(T_{\beta})_{\alpha,\beta,\alpha^{\prime}}$). Then by the Claim,
we have
$$\iso(X,Y)=\bigcup \limits_{\beta\in\Delta} H_{\beta}.$$
To prove that the set $\mathcal{T}$ satisfies condition~$(b)$, it remains to note that the index of $\aut(T_{\beta})_{\alpha,\beta,\alpha^{\prime}}$ in $\aut(T_{\beta})$ is at most~$n^3$, and hence $H_{\beta}$ can be found in time~$\poly(n)$ for every $\beta\in \Delta$. 
\end{proof}

\end{document}